\documentclass[12pt]{article}

\usepackage[T1]{fontenc}
\usepackage{lmodern}
\usepackage{microtype}
\usepackage[margin=1in]{geometry}
\usepackage{amsmath,amssymb,amsthm}
\usepackage{thmtools}
\usepackage[numbers,sort&compress]{natbib}
\usepackage[hidelinks]{hyperref}
\hypersetup{
  pdftitle={The Zombie Damage Number of a Graph},
  pdfauthor={Randy Davila},
  pdfsubject={Graph pursuit and damage games},
  pdfkeywords={damage number, zombie number, pursuit-evasion, graph theory}
}

\newtheorem{theorem}{Theorem}
\newtheorem{proposition}[theorem]{Proposition}
\newtheorem{lemma}[theorem]{Lemma}
\newtheorem{corollary}[theorem]{Corollary}
\newtheorem{conjecture}[theorem]{Conjecture}
\newtheorem{problem}[theorem]{Problem}
\theoremstyle{definition}
\newtheorem{definition}[theorem]{Definition}
\theoremstyle{remark}

\newcommand{\dmg}{\operatorname{dmg}}
\newcommand{\zdmg}{\operatorname{zdmg}}

\begin{document}

\title{The Zombie Damage Number of a Graph}
\author{Randy Davila\thanks{Corresponding author:
\texttt{randy@firstprinciples.com}}\\
\small First Principles, 100 University Avenue, Toronto, Ontario M5J 1V6,
Canada\\
\small Department of Computational Applied Mathematics and Operations
Research,\\
\small Rice University, Houston, Texas 77005, USA}
\date{}

\maketitle

\begin{abstract}
In the damage variant of Cops and Robber, the \emph{damage number}
\(\dmg(G)\) is the number of distinct vertices damaged by the robber under
optimal play, with one cop minimizing and the robber maximizing this number.
We introduce the \emph{zombie damage number} \(\zdmg(G)\), obtained by
requiring the pursuer to move at every turn
along a shortest path toward the survivor. The parameter therefore measures
the cost of geodesic pursuit when the objective is containment rather than
capture alone. We prove that \(\dmg(G)\leq\zdmg(G)\) and characterize the
graphs with \(\zdmg(G)=0\). Geodesic pursuit incurs no additional damage on
trees, and we determine the parameter exactly for paths, cycles, split graphs,
and complete multipartite graphs. In particular,
\(\zdmg(C_n)=n\) for \(n\geq5\), while
\(\zdmg(K_{n_1,\ldots,n_k})=n_1+n_2-2\) when \(n_i\geq2\) for every \(i\).
We also develop a nonbacktracking trace argument for sparse graphs. If \(G\)
is connected, \(\delta(G)\geq2\), and \(g(G)\geq5\), then every vertex is
damaged, and hence \(\zdmg(G)=n(G)\). The same argument gives the sharp bound
\(\zdmg(G)\geq g(G)\) for every connected graph of finite girth at least five.
It follows that fully subdividing each edge of a connected graph of minimum degree at
least two produces a graph with maximum zombie damage. These results yield an
unbounded separation from the ordinary damage number. Most of the questions
leading to these results, as well as the open conjectures, arose through
\textsc{Theo-Conjecture}, an advisor-supervised discovery loop combining
automated conjecturing, exact computation, language-model-assisted
exploration, and human mathematical judgment.
\end{abstract}

\medskip
\noindent\textbf{Keywords:} damage number; zombie number; pursuit-evasion;
cograph; split graph; girth

\noindent\textbf{AMS subject classifications:} 05C57, 05C69, 91A43

\section{Introduction}\label{sec:introduction}

The game of Cops and Robber is a basic model of pursuit on a graph. In its
one-cop form, the cop first chooses an initial vertex of a graph \(G\), after
which the robber chooses an initial vertex with knowledge of the cop's
position. Play then proceeds in rounds. The cop moves first in each round and,
if capture does not occur, the robber moves next. On a move, either player may
pass or move to an adjacent vertex. Both players know the entire history and
the current positions. The cop wins by occupying the robber's vertex after a
cop move; otherwise, the robber may evade indefinitely. Nowakowski and
Winkler~\cite{NowakowskiWinkler1983} and, independently,
Quilliot~\cite{Quilliot1978} introduced the one-cop game, and Aigner and
Fromme~\cite{AignerFromme1984} initiated the study of the \emph{cop number}
\(c(G)\), the minimum number of cops that guarantee capture. Bonato and
Nowakowski~\cite{BonatoNowakowski2011} give a general account.

Cox and Sanaei retained the same order of play but changed the objective from
capture alone to containment~\cite{CoxSanaei2019}. If the robber occupies a
vertex \(u\), survives the next cop move, and then completes a robber move
from \(u\), the vertex \(u\) is \emph{damaged}. A pass is allowed and damages
the robber's current vertex, while capture on the preceding cop move prevents
that damage. Repeated visits to a previously damaged vertex do not increase
the score. The \emph{damage number} \(\dmg(G)\) is the value of this
perfect-information game with one cop: the cop minimizes, and the robber
maximizes, the total number of distinct damaged vertices. Thus \(\dmg(G)\)
can remain informative when one cop cannot force capture, because the cop may
still confine an escaping robber to a small region of the graph. Damage games
with several cops or robbers, throttling, and graph products have since been
studied in~\cite{CarlsonEtAl2021,CarlsonHalloranReinhart2022,HugganEtAl2024,StojakovicWulf2022}.

The deterministic game of Zombies and Survivors has the same initial
placement, information, turn order, and capture rule, but restricts every
pursuer move. Suppose that the zombie is at \(z\), the survivor is at
\(s\ne z\), and it is the zombie's turn. The zombie must move to a neighbor
\(z'\) satisfying
\[
d_G(z',s)=d_G(z,s)-1;
\]
equivalently, it must take the first edge of a shortest \(z\)-\(s\) path. The
zombie cannot pass, but it chooses among the legal moves when several shortest
paths are available. The survivor, like the robber, may move to a neighbor or
pass. Fitzpatrick et al.\ introduced this deterministic model
in~\cite{FitzpatrickEtAl2016}. Its \emph{zombie number} \(z(G)\) is the
minimum number of zombies that guarantee capture. Structural relations
between \(c(G)\) and \(z(G)\) on claw-free graphs are studied by Davila and
Henning~\cite{DavilaHenning2026}.

We combine the damage objective with this geodesic restriction. In the
\emph{zombie damage game}, one zombie and one survivor use the preceding
zombie rules, and a vertex is damaged precisely when it would be damaged in
the ordinary damage game. The \emph{zombie damage number} \(\zdmg(G)\) is the
resulting optimal number of distinct damaged vertices, with the zombie
minimizing and the survivor maximizing the score. Since every legal zombie
strategy is also available to an ordinary cop, but not conversely,
\[
\dmg(G)\leq\zdmg(G).
\]
The difference between the parameters measures the loss of containment caused
by the shortest-path restriction. This leads to two basic questions: when is
geodesic pursuit as effective as unrestricted pursuit, and which graph
structures allow a survivor to exploit the restriction?

Our results identify mechanisms on both sides. On every tree, the zombie can
be coupled to an optimal cop, and hence \(\zdmg(T)=\dmg(T)\). We determine
this common value on paths and compute the zombie damage number of every cycle.
For \(n\geq5\), a survivor can remain two steps ahead of the zombie while
traversing \(C_n\), so
\[
\zdmg(C_n)=n.
\]
Thus the difference \(\zdmg(G)-\dmg(G)\) is unbounded even on graphs of
maximum degree two.

Dense decompositions produce the opposite behavior. On a connected split
graph, the two damage parameters are zero if the graph has a universal vertex
and are one otherwise. For a complete multipartite graph
\(K_{n_1,\ldots,n_k}\), with
\(2\leq n_1\leq\cdots\leq n_k\), we prove
\[
\dmg(K_{n_1,\ldots,n_k})=1,
\qquad
\zdmg(K_{n_1,\ldots,n_k})=n_1+n_2-2.
\]
The same argument leads to a co-component upper bound and to an exact formula
conjecture for cographs.

Our principal sparse-graph argument is based on nonbacktracking walks. If the
girth is at least five, then every path of length two is the unique geodesic
between its endpoints. A survivor following a nonbacktracking walk therefore
forces the zombie to follow the same trace at distance two. This observation
gives the sharp bound
\[
\zdmg(G)\geq g(G)
\]
whenever the finite girth \(g(G)\) is at least five. If also
\(\delta(G)\geq2\), then a spanning nonbacktracking walk exists and
\(\zdmg(G)=n(G)\). Consequently, fully subdividing every edge of a connected
minimum-degree-two graph forces maximum zombie damage.

Exact finite-state calculations were used to formulate several further
questions. The resulting conjectures predict equality with the ordinary
damage number on chordal graphs, a sharp constant-factor bound on claw-free
graphs, and damage to every vertex of a bridgeless cubic graph \(G\) with
\(n(G)\geq10\). The girth theorem proves the latter prediction when triangles
and quadrilaterals are absent.

Computer assistance was used during the formulation of questions. The
computer system \textsc{Theo-Conjecture} compared exact values on finite
graphs and proposed relations for further study. A language model helped
interpret the candidates and suggest counterexample searches. The author
selected the problems, assessed their significance, and developed and checked
the proofs. This use of computation continues the graph-theoretic practice of
machine-assisted conjecturing initiated by Fajtlowicz's
\emph{Graffiti}~\cite{Fajtlowicz1988}. The present system developed from the
author's \emph{TxGraffiti}~\cite{DavilaTxGraffiti2026}; its
advisor-supervised discovery process is discussed
in~\cite{DavilaExecutableArtifacts2026}. A conjecture bearing the
\textsc{Theo-Conjecture} designation was first identified through this
process. The designation concerns its origin only: proofs and finite evidence
are stated separately.

The paper is organized as follows. Section~\ref{sec:prelim} gives the game
definitions and notation. Section~\ref{sec:basic-families} treats trees,
paths, and cycles. Section~\ref{sec:structure} develops the dense
decomposition arguments, including the exact complete multipartite formula.
Section~\ref{sec:sparse} introduces the nonbacktracking method for sparse
graphs. Section~\ref{sec:comparisons} records metric and pursuit comparisons.
Section~\ref{sec:conjectures} presents the remaining conjectures and their
mathematical motivation, and Section~\ref{sec:conclusion} concludes with open
problems.

\section{Preliminaries}\label{sec:prelim}

All graphs are finite, simple, and connected unless stated otherwise. The
order and size of a graph \(G\) are denoted by
\(n(G)=|V(G)|\) and \(m(G)=|E(G)|\), respectively.
For \(X\subseteq V(G)\), let \(G[X]\) denote the subgraph induced by \(X\),
and let \(\overline G\) denote the complement of \(G\). We write \(N_G(v)\)
for the open neighborhood of \(v\) and
\(N_G[v]=N_G(v)\cup\{v\}\) for its closed neighborhood; subscripts are omitted
when the graph is clear.
The minimum and maximum degrees are \(\delta(G)\) and \(\Delta(G)\). A
\emph{universal vertex} has degree \(n(G)-1\).

We denote the distance from \(u\) to \(v\) by \(d_G(u,v)\). The
\emph{radius} of \(G\) is
\[
\operatorname{rad}(G)=
\min_{v\in V(G)}\max_{u\in V(G)}d_G(u,v).
\]
The
\emph{girth} \(g(G)\) is the length of a shortest cycle and is infinite when
\(G\) is acyclic. A graph is \emph{cubic} if every vertex has degree three,
and it is \emph{bridgeless} if no edge is a bridge. A graph is
\emph{claw-free} if it has no induced \(K_{1,3}\), \emph{chordal} if it has no
induced cycle of length at least four, \emph{split} if its vertex set can be
partitioned into a clique and an independent set, and a \emph{cograph} if it
has no induced path on four vertices. A \emph{co-component} of \(G\) is the
subgraph induced by a component of \(\overline G\). Distinct co-components
are joined by all possible edges. The \emph{connected domination number}
\(\gamma_c(G)\) is the minimum cardinality of a connected dominating set. The
graph \(S(G)\) is obtained from \(G\) by subdividing every edge exactly once.

In Cops and Robber, the cop chooses an initial vertex, the robber then chooses
an initial vertex, and the cop moves first. At each turn either player may
move to a neighboring vertex or remain in place. Capture occurs if, after a
cop move, the cop occupies the robber's vertex. The minimum number of cops
that guarantee capture is the \emph{cop number} \(c(G)\). Deterministic
Zombies and Survivors has the same order of play, but every zombie must move
at each zombie turn to a neighbor closer to the survivor. The corresponding
minimum number of zombies that guarantee capture is the \emph{zombie number}
\(z(G)\).

In the one-cop damage game, a vertex is damaged when the robber occupies it
through a noncapturing cop turn and then completes a move from it, where
remaining in place is permitted. The \emph{damage number} \(\dmg(G)\) is the
number of distinct damaged vertices under optimal play, with the cop
minimizing and the robber maximizing this number~\cite{CoxSanaei2019}.

\begin{definition}
In the \emph{zombie damage game}, the pursuer must move at every turn. If the
zombie is at \(z\) and the survivor is at \(s\ne z\), then a legal zombie move
has the form \(z\to z'\), where \(z'\in N(z)\) and
\[
d(z',s)=d(z,s)-1.
\]
The survivor may move to a neighbor or pass. Damage is recorded by the same
rule as in the ordinary damage game. The \emph{zombie damage number}
\(\zdmg(G)\) is the resulting optimal payoff, with the zombie minimizing and
the survivor maximizing the number of distinct damaged vertices.
\end{definition}

When describing a strategy, we write \(u\to v\) for a move from \(u\) to
\(v\). A survivor may remain in place; we refer to this move as a \emph{pass}.
We say that \(G\) has \emph{full zombie damage} if
\(\zdmg(G)=n(G)\).

\begin{proposition}\label{prop:comparison}
If \(G\) is a connected graph, then
\[
\dmg(G)\leq \zdmg(G).
\]
\end{proposition}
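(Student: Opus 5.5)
The plan is a strategy-restriction argument. Both games are finite-position perfect-information games with identical positions (pursuer vertex, evader vertex, set of damaged vertices), identical initial placement rules, turn order, capture rule and damage rule. The only difference is the pursuer's move set. In the zombie game the pursuer at \(z\), facing the survivor at \(s\neq z\), may move only to \(z'\in N(z)\) with \(d(z',s)=d(z,s)-1\). Every such move is a legal cop move in the damage game, since a cop may move to any neighbor (or pass). Any initial zombie placement is also a legal cop placement. Hence every zombie strategy \(\sigma\) is, verbatim, a cop strategy.

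The key steps, in order, are as follows.

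First, fix an optimal zombie strategy \(\sigma\). It guarantees that, against every survivor strategy, at most \(\zdmg(G)\) distinct vertices are damaged. Its existence follows because the damage payoff is bounded by \(n(G)\) and the value is defined as the min–max over strategies. If one prefers to avoid optimality issues, it is enough to take any zombie strategy achieving payoff at most \(\zdmg(G)\) against all survivors.

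Second, let the cop play \(\sigma\) in the ordinary damage game. Every cop move so produced is legal there, because \(G\) is connected and the prescribed zombie move exists whenever \(z\neq s\).

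Third, consider any robber strategy \(\rho\). The robber and survivor have identical move sets, so \(\rho\) is a survivor strategy. The play of \((\sigma,\rho)\) is therefore the same sequence of positions in both games. Since capture and damage are determined by the sequence of positions by the same rule, the damaged set is identical. Hence the damage is at most \(\zdmg(G)\) for every \(\rho\). Taking the minimum over cop strategies gives \(\dmg(G)\le\zdmg(G)\).

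The only point needing care is the formal statement that the game values are well defined as min–max over (history-dependent) strategies. We also need a cop strategy to be allowed to depend on history exactly as a zombie strategy does. Both games are finite once states include the damaged set, and play can be truncated after the damaged set stabilizes, so determinacy is standard. I expect this bookkeeping, not any combinatorial argument, to be the main (mild) obstacle.
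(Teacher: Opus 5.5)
Your proposal is correct and follows the paper's argument. Every legal zombie strategy is a legal cop strategy that yields the same plays and damaged sets, so the cop minimizes the same payoff over a larger strategy set. The determinacy bookkeeping you flag is harmless but unnecessary, since the inequality follows directly when both parameters are read as min--max values over strategies.
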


\begin{proof}
Every legal zombie strategy is a legal cop strategy. The cop minimizes the
same payoff over a superset of the strategies available to the zombie.
\end{proof}

The zero case admits a complete characterization.

\begin{proposition}\label{prop:zero}
If \(G\) is a connected graph, then \(\zdmg(G)=0\) if and only if \(G\) has a
universal vertex.
\end{proposition}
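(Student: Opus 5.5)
We prove the two implications separately, working directly from the damage rule: a vertex \(u\) is damaged only if the survivor occupies \(u\), survives the following zombie move, and then completes a move from \(u\).

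\emph{Universal vertex implies \(\zdmg(G)=0\).} Let \(w\) be a universal vertex, and let the zombie start at \(w\). The survivor then chooses a start \(s\). If \(s=w\), capture has already occurred; otherwise we interpret the start as immediate capture, or, more carefully, the survivor is placed at some \(s\ne w\). In that case \(d(w,s)=1\), so the zombie's first move \(w\to s\) is legal, since \(d(s,s)=0=d(w,s)-1\). This move captures the survivor before it ever completes a move, so no vertex is damaged. If the survivor is allowed to start on the zombie's vertex, that is already capture. Either way the zombie's strategy yields payoff \(0\). This is the easy direction.

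\emph{\(\zdmg(G)=0\) implies a universal vertex.} We argue by contraposition. Suppose no vertex is universal. Whatever vertex \(z_0\) the zombie chooses, there is a vertex \(s\) with \(d(z_0,s)\ge2\), and the survivor starts there. The zombie must move to some \(z_1\) with \(d(z_1,s)=d(z_0,s)-1\ge1\), so capture does not occur on this move. The survivor now passes, which completes a move from \(s\) and damages \(s\). Hence every zombie strategy allows at least one damaged vertex, and so \(\zdmg(G)\ge1\).

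For the trivial graph \(K_1\), the single vertex is universal and the value is \(0\). The only subtlety to check is the convention for when the survivor starts on the zombie's vertex; this should count as immediate capture with no damage, consistent with the damage-game rules. No real obstacle is expected, since both directions are one-move arguments.
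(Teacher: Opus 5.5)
Your proof is correct and follows the paper's argument essentially step for step: the zombie starts at a universal vertex and captures on its first move; conversely, against any nonuniversal start the survivor begins at a nonneighbor, survives the forced first move, and passes to damage its starting vertex. Your explicit checks that \(w\to s\) is a legal geodesic move and that \(d(z_1,s)\ge1\) rules out capture are details the paper leaves implicit.
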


\begin{proof}
If \(u\) is universal, the zombie starts at \(u\) and captures the survivor on
its first move, before any vertex is damaged. Conversely, suppose that the
zombie starts at a nonuniversal vertex \(z\). The survivor may start at a
nonneighbor of \(z\). The first zombie move cannot capture the survivor, which
may then pass and damage its initial vertex. Thus a strategy producing no
damage must start at a universal vertex.
\end{proof}

The same argument, with an unrestricted cop, recovers the corresponding zero
case for \(\dmg(G)\).

\section{Trees, paths, and cycles}\label{sec:basic-families}

On a tree, the unique geodesic from the pursuer to the evader orders pursuer
positions naturally. This makes it possible to compare a forced zombie with an
arbitrary cop move by move.

\begin{theorem}\label{thm:trees}
If \(T\) is a tree, then
\[
\zdmg(T)=\dmg(T).
\]
\end{theorem}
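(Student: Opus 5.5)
The inequality \(\dmg(T)\leq\zdmg(T)\) is Proposition~\ref{prop:comparison}, so it remains to prove \(\zdmg(T)\leq\dmg(T)\). The plan is a shadow-strategy argument. The zombie fixes an optimal cop strategy \(\sigma\) for the ordinary damage game on \(T\) and plays against the survivor while privately simulating a \emph{shadow cop} that follows \(\sigma\) against the same sequence of survivor moves. The zombie starts at the cop's initial vertex. Whenever the shadow cop moves from \(c\) to \(c'\), the zombie makes its forced move: one step along the unique \(z\)--\(s\) path. On a tree the legal zombie move is unique, so no choice is involved. The survivor's moves are fed to the shadow game unchanged.

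The key invariant is the following: at every moment before capture, the zombie's vertex \(z\) lies on the unique path \(P(c,s)\) from the shadow cop \(c\) to the survivor \(s\). Initially \(z=c\), so it holds. I would verify that each type of move preserves it.

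\emph{Cop and zombie moves.} Let the shadow cop move to \(c'\in N[c]\), and let the zombie move from \(z\) to \(z'\), the neighbour of \(z\) on \(P(z,s)\).
\begin{itemize}
\item If \(c'\) is not on \(P(c,s)\), which covers passing and moving away from \(s\), then \(P(c',s)\) contains \(P(c,s)\). Since \(z'\in P(z,s)\subseteq P(c,s)\), the invariant holds.
\item If \(c'\) is the next vertex of \(P(c,s)\) and \(z\neq c\), then \(z\in P(c',s)\), and hence \(z'\in P(c',s)\).
\item If \(c'\) is the next vertex of \(P(c,s)\) and \(z=c\), then \(z'=c'\).
\end{itemize}
In every case \(z'\in P(c',s)\). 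In particular, if the shadow cop captures, so that \(c'=s\), then \(P(c',s)=\{s\}\) and the zombie also captures.

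\emph{Survivor moves.} Let the survivor move to \(s'\in N[s]\), with \(z\neq s\). If \(s'\) lies off \(P(c,s)\), then \(P(c,s')=P(c,s)\cup\{s'\}\). Otherwise \(s'\) is the predecessor of \(s\) on the path, \(P(c,s')=P(c,s)\setminus\{s\}\), and \(z\) survives the deletion because \(z\neq s\). A pass changes nothing.

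With the invariant established, the damage comparison follows. Up to the moment the zombie captures, the two games share the same survivor trajectory. A vertex \(u\) is damaged in the zombie game only if the survivor sits at \(u\), survives the preceding zombie move, and then moves. By the capture implication above, the shadow cop did not capture on that turn either, so \(u\) is damaged in the shadow game as well. If the zombie captures first, play stops, and the zombie game simply records fewer damaged vertices. Hence, for every survivor strategy, the damage in the zombie game is at most the damage that \(\sigma\) allows, which is at most \(\dmg(T)\). Therefore \(\zdmg(T)\leq\dmg(T)\).

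The main obstacle is the invariant step where the shadow cop passes or retreats, since the zombie cannot imitate those moves. The tree structure handles this: the path from the retreating cop's new position to \(s\) contains the old path, so the zombie's forced advance toward \(s\) stays on it. One point needs care. The survivor strategy in the zombie game may depend on zombie positions rather than shadow positions. This causes no problem, because the argument bounds the outcome against an arbitrary fixed sequence of survivor moves, and \(\sigma\) guarantees at most \(\dmg(T)\) against every such sequence.
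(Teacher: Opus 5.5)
Your proof is correct and follows essentially the same route as the paper. Both arguments run a virtual optimal cop against the same survivor moves, maintain the invariant that the zombie lies on the unique cop--survivor path through the same pursuer-move and survivor-move cases, and then transfer every damaged vertex to the cop game. One slip: a pass has \(c'=c\in P(c,s)\), so it is not literally covered by your first case, although there \(z'\in P(z,s)\subseteq P(c,s)=P(c',s)\) is immediate.
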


\begin{proof}
Fix an optimal one-cop damage strategy, and let the zombie start at the same
initial vertex as the cop. Against an arbitrary survivor strategy, run a
virtual copy of the cop strategy using the same survivor positions. Write
\(c\), \(z\), and \(s\) for the virtual cop, zombie, and survivor positions
immediately before a pursuer move.

We maintain the invariant that \(z\) lies on the unique \(c\)-\(s\) path,
with the vertices occurring in that order. It holds initially because
\(c=z\). Suppose that it holds before a pursuer move, and let \(c'\) and
\(z'\) be the new virtual-cop and zombie positions. The zombie moves one edge
from \(z\) toward \(s\). If the virtual cop moves toward \(s\), then it moves
at most one edge along the \(c\)-\(s\) path and cannot pass \(z'\). If it
passes or leaves this path, the unique \(c'\)-\(s\) path returns through
\(c\), and hence through \(z'\). Thus, unless the zombie captures the
survivor, \(z'\) lies on the \(c'\)-\(s\) path.

The survivor now makes the same move in the two games. If it passes or moves
away from the pursuers, the new path from \(c'\) to the survivor still
contains \(z'\). If it moves toward the pursuers, it cannot reach or cross
\(z'\), since such a move would be illegal in the zombie game. The invariant
is therefore preserved. The same observation shows that the survivor never
occupies \(c'\) without first occupying \(z'\), so every move legal against
the zombie is also legal in the virtual cop game.

Consequently, every vertex damaged against the zombie is damaged in the
coupled play against the virtual cop. If the zombie captures first, extend the
virtual play arbitrarily. The optimal cop strategy permits at most
\(\dmg(T)\) damaged vertices, so the zombie also permits at most
\(\dmg(T)\). Proposition~\ref{prop:comparison} gives the reverse inequality.
\end{proof}

The path formula refines Theorem~\ref{thm:trees} by determining the common
value.

\begin{theorem}\label{thm:paths}
If \(n\ge2\), then
\[
\zdmg(P_n)=\dmg(P_n)=\left\lfloor\frac n2\right\rfloor-1.
\]
\end{theorem}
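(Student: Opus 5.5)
By Proposition~\ref{prop:comparison}, it suffices to prove the two one-sided bounds
\[
\dmg(P_n)\ge\lfloor n/2\rfloor-1
\qquad\text{and}\qquad
\zdmg(P_n)\le\lfloor n/2\rfloor-1 .
\]
Theorem~\ref{thm:trees} already identifies the two parameters on paths, so this pins down the common value; it is not otherwise needed. Label the path \(v_1v_2\cdots v_n\). For \(n\le3\) the path has a universal vertex and Proposition~\ref{prop:zero} gives \(0\), so I assume \(n\ge4\) wherever convenient.

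For the lower bound on \(\dmg\), let the cop start at \(v_c\). By symmetry I take the right side to be the larger one, so that \(n-c\ge\lceil (n-1)/2\rceil\). If \(n-c\ge2\), the robber starts at \(v_{c+2}\) and keeps moving right, then passes forever once it reaches \(v_n\). I will maintain the invariant that, immediately after each cop move, the cop lies at an index at most one less than the robber's. This holds because the robber opens a gap of \(2\) and the cop closes it by at most \(1\) per move, after which the robber restores it while it can still move right. The invariant implies the robber is never captured while advancing. It therefore damages \(v_{c+2},\dots,v_{n-1}\) as it leaves them, and then \(v_n\) on its first pass there. At that moment the cop is at index at most \(n-1\), so the robber survives the preceding cop move. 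This yields \(n-c-1\ge\lceil (n-1)/2\rceil-1=\lfloor n/2\rfloor-1\) damaged vertices. The degenerate case \(n-c\le1\) forces \(n\le3\), which is already handled.

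For the upper bound on \(\zdmg\), the zombie starts at \(v_c\) with \(c=\lfloor n/2\rfloor+(n\bmod 2)\). Then both sides have at most \(\lfloor n/2\rfloor\) vertices: for \(n\) even the sides have \(n/2-1\) and \(n/2\) vertices, and for \(n\) odd both have \((n-1)/2\). Say the survivor starts on the right. The zombie's forced first move goes to \(v_{c+1}\) (capturing if the survivor is there). The key observation is that, since the zombie always steps toward the survivor and the survivor can never move past or onto the zombie, the zombie's position is a nondecreasing index that stays strictly below the survivor's. So the survivor only ever occupies vertices with index at least \(c+2\) at the moment a damage event could occur. Hence at most \(n-c-1\le\lfloor n/2\rfloor-1\) vertices are damaged; the left side is symmetric with a count no larger.

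The main obstacle is making this last claim airtight. I must check that a survivor at distance \(1\) after a zombie move can only retreat or pass. I must also check that stepping onto the zombie's vertex, or any vertex of index at most the zombie's, is impossible or results in capture, so that \(v_{c+1}\) and the entire left side are never damaged. I would verify this by the same ordering invariant used in the proof of Theorem~\ref{thm:trees}, specialized to the path. There the zombie's index only increases and the survivor's index always exceeds it until capture.
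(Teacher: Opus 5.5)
Your proposal is correct and follows essentially the paper's own proof. The zombie starts at a central vertex, which confines the survivor to a terminal subpath of at most \(\lfloor n/2\rfloor-1\) vertices. Against any initial cop vertex, the robber starts at distance two on the larger side, runs to the end and passes once, and Proposition~\ref{prop:comparison} closes the sandwich; your explicit index invariants just make rigorous the separation claims the paper states in one line.
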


\begin{proof}
The assertion is immediate for \(n=2,3\), so assume \(n\ge4\). Let the pursuer
start at a central vertex \(v\). Each component of \(P_n-v\) has at most
\(\lfloor n/2\rfloor\) vertices. If the evader starts in one such component,
the pursuer moves along the unique path toward it. After the first pursuer
move, the occupied vertex separates the evader from \(v\), and the evader is
confined to a terminal subpath containing at most
\(\lfloor n/2\rfloor-1\) vertices. This gives the upper bound for both an
ordinary cop and a zombie.

For the reverse inequality, consider an arbitrary initial pursuer vertex. One
component of its deletion contains a terminal path
\(x_1x_2\cdots x_\ell\), where \(x_1\) is adjacent to the pursuer and
\(\ell\ge\lfloor n/2\rfloor\). The evader starts at \(x_2\), moves successively
toward \(x_\ell\), and passes once after reaching \(x_\ell\). A pursuer cannot
enter this path ahead of the evader, and following from \(x_1\) preserves a
separation of one edge until the final pass. Hence the evader damages
\(x_2,\ldots,x_\ell\), a total of at least
\(\lfloor n/2\rfloor-1\) vertices. This strategy is valid against the more
powerful ordinary cop and therefore also against a zombie.
\end{proof}

Cycles behave differently because the survivor can convert the geodesic rule
into a prescribed direction of travel.

\begin{theorem}\label{thm:cycles}
For the cycle \(C_n\),
\[
\zdmg(C_n)=
\begin{cases}
0,&n=3,\\
2,&n=4,\\
n,&n\ge5.
\end{cases}
\]
\end{theorem}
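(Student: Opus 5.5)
The plan is to treat the three cases separately, with almost all of the work in the case \(n\ge5\).

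\emph{The case \(n=3\).} Here \(C_3=K_3\) has a universal vertex, so Proposition~\ref{prop:zero} gives \(\zdmg(C_3)=0\).

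\emph{The case \(n=4\).} Label \(C_4\) as \(v_1v_2v_3v_4\).
\begin{itemize}
\item Lower bound. There is no universal vertex, so Proposition~\ref{prop:zero} gives at least one damaged vertex. For a second, let the zombie start at \(v_1\) and the survivor at \(v_3\). The survivor passes, damaging \(v_3\). The zombie must then move to \(v_2\) or \(v_4\), say \(v_2\). The survivor now moves to \(v_4\) and passes, damaging \(v_4\), since from \(v_2\) the vertex \(v_4\) is again at distance two.
\item A third damaged vertex would need the survivor to stay antipodal forever. It cannot: from \(v_4\) its only non-adjacent target is \(v_2\), which the zombie occupies. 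I would verify that the survivor never damages a third vertex. This should be a finite check over the three possible relative configurations (distance \(0\), \(1\), \(2\)) together with the set of already damaged vertices.
\item Upper bound. When the survivor is adjacent to the zombie after a zombie move, it must flee to the antipode of its current position or be captured. Tracking this shows the damaged set stays within two vertices. The zombie may also exploit its choice of direction when at distance two. I would carry this out as a short case analysis.
\end{itemize}

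\emph{The case \(n\ge5\), upper bound.} The bound \(\zdmg(C_n)\le n\) is trivial.

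\emph{The case \(n\ge5\), lower bound.} Write \(C_n=v_0v_1\cdots v_{n-1}\) with indices mod \(n\).
\begin{itemize}
\item Initial placement. Whatever start \(z=v_0\) the zombie chooses, the survivor starts at \(v_2\), so that \(d(z,s)=2\) and, since \(n\ge5\), the geodesic \(v_0v_1v_2\) is unique.
\item Invariant. Immediately before each zombie move, the zombie is at \(v_{i}\) and the survivor at \(v_{i+2}\).
\item Zombie move. Because \(n\ge5\), the unique legal zombie move is to \(v_{i+1}\). The move to \(v_{i-1}\) would not decrease the distance, since the other way around the cycle has length \(n-2\ge3>2\).
\item Survivor move. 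The survivor then steps to \(v_{i+3}\). This damages \(v_{i+2}\) and restores the invariant.
\end{itemize}
The survivor thus walks around the cycle in one direction indefinitely and is never captured. Every vertex \(v_2,v_3,\dots\) is eventually damaged, and after \(n\) steps this covers all of \(V(C_n)\). For \(n=5\) and larger the invariant is consistent, since the antipodal arc always has length at least three.

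The main obstacle is the upper bound for \(n=4\): showing that no survivor strategy damages three vertices against a suitable zombie strategy. The zombie starts anywhere; by symmetry, take \(v_1\). The survivor's only non-capturing start is \(v_3\). The key point is that after each zombie move the survivor is adjacent to the zombie. It must therefore move to the unique vertex at distance two from the zombie, and it damages a new vertex only on these forced moves. I will try to show that the zombie, by always choosing the same rotational direction, forces the survivor to alternate between just two vertices, say \(v_3\) and \(v_4\). If that direction fails, I would instead enumerate the small state space directly.
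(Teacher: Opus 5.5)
Your cases $n=3$ and $n\ge5$ are correct and follow the paper's argument: the survivor starts at distance two, where the geodesic is unique because $n-2\ge3$, and then stays two steps ahead in a fixed direction. The gap is the upper bound $\zdmg(C_4)\le2$. The proposal does not actually prove it, and the one concrete zombie strategy you suggest fails. Suppose the zombie always turns the same way, starting at $v_1$ with the survivor at $v_3$. The zombie then moves $v_1\to v_2\to v_3\to v_4\to v_1$. Each time, the survivor's only response avoiding capture is the vertex antipodal to the zombie, so it moves $v_3\to v_4\to v_1\to v_2\to v_3$ and damages all four vertices: it is simply pushed around the cycle. For the same reason, your remark that a third damaged vertex is impossible ``from $v_4$'' cannot hold independently of the zombie's play. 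Whether a third vertex is damaged depends on which of its two legal moves the zombie makes from an antipodal position.

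The missing idea is that the zombie must reverse direction at every move, oscillating along the fixed edge $v_1v_2$. This is legal, because from an antipodal position both neighbours of the zombie are geodesic first steps; in particular, with the zombie at $v_2$ and the survivor at $v_4$, the move $v_2\to v_1$ is permitted. After each zombie move the survivor is adjacent to the zombie and must move to the zombie's antipode. That vertex is $v_4$ when the zombie is at $v_2$ and $v_3$ when it is at $v_1$, so every damaged vertex lies in $\{v_3,v_4\}$. This is the paper's argument, and it is the $K_{2,2}$ instance of the alternating strategy in Theorem~\ref{thm:multipartite}.

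Two smaller slips should also be fixed. First, the survivor flees to the antipode of the \emph{zombie's} vertex, not of its own, since the latter is not adjacent to it. Second, in your lower bound the zombie moves first after placement, so the survivor cannot pass at $v_3$ beforehand. Passing at $v_3$ after the zombie reaches $v_2$ would lead to capture with only one damaged vertex. The correct line is $v_1\to v_2$, then $v_3\to v_4$ (damaging $v_3$); after the zombie's next move, any survivor move from $v_4$ damages $v_4$, so the count of two stands.
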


\begin{proof}
The graph \(C_3\) has a universal vertex, so Proposition~\ref{prop:zero}
applies. On \(C_4\), label the vertices cyclically as
\(v_0,v_1,v_2,v_3\). After the zombie starts at \(v_0\), the survivor starts
at \(v_2\). If the zombie moves to \(v_1\), the survivor moves to \(v_3\) and
can damage both \(v_2\) and \(v_3\), regardless of the zombie's next choice.
Thus \(\zdmg(C_4)\ge2\).

For the reverse inequality on \(C_4\), the zombie again starts at \(v_0\).
An adjacent survivor is captured immediately, so suppose that it starts at
\(v_2\), and let the zombie move to \(v_1\). Only the move to \(v_3\) avoids
imminent capture after the first damaged vertex. The zombie then moves to
\(v_0\). Passing or moving to \(v_0\) permits immediate capture; moving back
to \(v_2\) returns, up to symmetry, to the same antipodal state and visits no
new vertex. Repeating this response confines the survivor to
\(\{v_2,v_3\}\), and hence \(\zdmg(C_4)\le2\).

Now let \(n\ge5\), and consider any initial zombie vertex. The survivor starts
at distance two from it. At this distance there is a unique geodesic first
edge from the zombie to the survivor. After the zombie takes that edge, the
survivor moves one edge in the same cyclic direction. The players are again
at distance two. Repeating the strategy prevents capture and moves the
survivor around the entire cycle. Every vertex is eventually damaged, so
\(\zdmg(C_n)\ge n\). The reverse inequality is immediate.
\end{proof}

Cox and Sanaei proved that
\(\dmg(C_n)=\lfloor(n-1)/2\rfloor\) for
\(n\ge4\)~\cite[Theorem~2.5]{CoxSanaei2019}. The preceding theorem therefore
gives a first quantitative answer to the cost of geodesic pursuit.

\begin{corollary}\label{cor:gap}
The difference \(\zdmg(G)-\dmg(G)\) is unbounded. More precisely, if
\(n\ge5\), then
\[
\zdmg(C_n)-\dmg(C_n)
=n-\left\lfloor\frac{n-1}{2}\right\rfloor.
\]
\end{corollary}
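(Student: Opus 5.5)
The corollary follows by combining Theorem~\ref{thm:cycles} with the formula of Cox and Sanaei. For \(n\ge5\), Theorem~\ref{thm:cycles} gives \(\zdmg(C_n)=n\). Since \(n\ge5\ge4\), \cite[Theorem~2.5]{CoxSanaei2019} gives \(\dmg(C_n)=\lfloor (n-1)/2\rfloor\). Subtracting these two values yields the displayed identity
\[
\zdmg(C_n)-\dmg(C_n)=n-\left\lfloor\frac{n-1}{2}\right\rfloor.
\]

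For unboundedness, I will bound the right-hand side from below. Using \(\lfloor (n-1)/2\rfloor\le (n-1)/2\), I get
\[
n-\left\lfloor\frac{n-1}{2}\right\rfloor\ge \frac{n+1}{2}.
\]
This tends to infinity with \(n\). Hence the difference \(\zdmg(G)-\dmg(G)\) is unbounded already over the family of cycles, so even among graphs of maximum degree two. If an exact form is wanted, one can check the parities: the difference equals \(\lceil (n+1)/2\rceil\), which is \((n+1)/2\) for odd \(n\) and \(n/2+1\) for even \(n\).

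No step is a genuine obstacle. The only point needing care is that the cited Cox--Sanaei formula is stated for \(n\ge4\), and the range \(n\ge5\) required here lies inside it. I will also note that both game models use the same damage rule, which the preliminaries make explicit. This ensures the two numbers being subtracted measure the same quantity.
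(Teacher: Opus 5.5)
Your proposal is correct and matches the paper's argument: the corollary comes from subtracting the Cox--Sanaei value $\dmg(C_n)=\lfloor (n-1)/2\rfloor$ (valid for $n\ge4$) from the value $\zdmg(C_n)=n$ given by Theorem~\ref{thm:cycles}. Your lower bound $(n+1)/2$ and the parity form $\lceil (n+1)/2\rceil$ are both correct, and they spell out the unboundedness claim that the paper leaves implicit.
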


\section{Dense decompositions}\label{sec:structure}

The preceding examples show that sparse graphs may permit extensive damage.
In a dense graph, the complementary point of view is often more useful. At
the end of a survivor move, the survivor must occupy a nonneighbor of the
zombie; otherwise, the zombie captures it on the next move. Thus the relevant
question is not density by itself, but how the nonedges of the graph are
organized.

This section develops that principle through decompositions that localize
nonneighbors. We first show that the clique in a split partition confines the
survivor strongly enough to determine both damage parameters. We then pass to
co-components. Since vertices in distinct co-components are adjacent, every
nonneighbor of a vertex lies in its own co-component; a zombie alternating
between two suitably chosen vertices can therefore restrict all damage to two
explicit exceptional sets. This yields a general co-component upper bound.
For complete multipartite graphs the exceptional sets are entire parts, and
we prove that the bound is exact. The same viewpoint motivates the cograph
conjecture in Section~\ref{sec:conjectures}.

\subsection{Split graphs and co-components}

We begin with split graphs, where a clique controls every move out of the
independent part.

\begin{theorem}\label{thm:split}
If \(G\) is a connected split graph, then
\[
\dmg(G)=\zdmg(G)=
\begin{cases}
0,&\text{if \(G\) has a universal vertex},\\
1,&\text{otherwise}.
\end{cases}
\]
\end{theorem}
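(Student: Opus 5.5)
We will treat the two cases separately. In the universal-vertex case both parameters vanish: Proposition~\ref{prop:zero} gives \(\zdmg(G)=0\), and Proposition~\ref{prop:comparison} then gives \(0\le\dmg(G)\le\zdmg(G)=0\). So assume from now on that \(G\) has no universal vertex. By Proposition~\ref{prop:comparison} it suffices to prove
\[
1\le\dmg(G)\qquad\text{and}\qquad \zdmg(G)\le 1 .
\]

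For the lower bound we use the remark after Proposition~\ref{prop:zero}, which is the same argument run against an unrestricted cop. Wherever the cop starts, that vertex is not universal, so the robber starts at a nonneighbor. The cop's first move cannot capture, so the robber passes and damages its initial vertex. Hence \(\dmg(G)\ge1\), and therefore \(\zdmg(G)\ge1\).

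For the upper bound we give an explicit zombie strategy. Fix a split partition \(V(G)=K\cup I\) with \(K\) a clique and \(I\) independent. We may assume \(K\neq\emptyset\): if \(K\) were empty, connectivity would force \(G=K_1\), which has a universal vertex. The zombie starts at an arbitrary vertex \(k\in K\).

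\begin{itemize}
\item[(a)] If the survivor starts in \(K\), it is adjacent to \(k\) and is captured on the first move, so there is no damage.
\item[(b)] Otherwise the survivor starts at some \(s\in I\) with \(s\notin N(k)\). Connectivity gives \(N(s)\neq\emptyset\), and independence of \(I\) gives \(N(s)\subseteq K\). Any vertex of \(N(s)\) lies in the clique \(K\), hence is adjacent to \(k\), so \(d(k,s)=2\).
\item[(c)] A legal zombie move goes to some \(k'\) with \(d(k',s)=1\), that is, \(k'\in N(s)\subseteq K\). In particular the zombie stays in \(K\).
\item[(d)] Now the survivor is adjacent to the zombie. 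Passing allows capture. Any move goes to a vertex of \(N(s)\subseteq K\), which is either \(k'\) itself or a neighbor of \(k'\) in the clique. In either case the zombie captures on its next move.
\item[(e)] So exactly one vertex is damaged, namely \(s\), when the survivor leaves it. The vertex the survivor moves to is not damaged, because capture follows before the survivor completes a move from it.
\end{itemize}
This gives \(\zdmg(G)\le1\), and combining with the lower bound proves the theorem.

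The main point requiring care is the legality bookkeeping in (c) and (d). We must check that every geodesic first step from \(k\) toward \(s\in I\) stays inside \(K\), so that the zombie's lack of choice does not hurt it. We must also confirm that the damage-timing rule charges \(s\) but not the survivor's next vertex. Both follow from \(N(s)\subseteq K\) and \(K\) being a clique.
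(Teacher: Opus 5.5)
Your proof is correct and matches the paper's argument. The zombie starts in the clique, and a survivor not initially adjacent to it sits in $I$ at distance two. After the zombie's geodesic step into $N(s)\subseteq K$, every survivor option leaves it adjacent to the zombie, so only $s$ is damaged; the lower bound comes from the no-universal-vertex argument. The differences are cosmetic: you observe that every legal first zombie move lands in $K$ instead of fixing one neighbor $x$, and your case split should also explicitly dispatch a survivor starting in $I\cap N(k)$, which is captured immediately.
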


\begin{proof}
Let \(V(G)=C\cup I\), where \(C\) is a clique and \(I\) is an independent
set. The assertion in the universal-vertex case follows from
Proposition~\ref{prop:zero} and its ordinary-game analogue. Suppose that
\(G\) has no universal vertex, and let the zombie start at any vertex
\(c\in C\).

If the survivor starts adjacent to \(c\), then it is captured on the first
zombie move. Otherwise, it starts at a vertex \(u\in I\) that is not adjacent
to \(c\). Since \(G\) is connected, \(u\) has a neighbor \(x\in C\). The path
\(c,x,u\) is a geodesic, so the zombie may move from \(c\) to \(x\). The
survivor can now damage \(u\), but it cannot damage another vertex: passing
leaves it adjacent to \(x\), while every move from \(u\) enters the clique
\(C\) and again leaves it adjacent to \(x\). Hence \(\zdmg(G)\le1\). Since
\(G\) has no universal vertex, both damage parameters are positive; together
with Proposition~\ref{prop:comparison}, this proves the stated equality.
\end{proof}

The split partition is one instance of a dense decomposition that constrains
safe survivor moves. For a co-component \(H\), define
\[
q(H)=\min_{v\in V(H)}|V(H)-N_H[v]|
=n(H)-1-\Delta(H).
\]
Thus \(q(H)\) is the smallest number of nonneighbors that a vertex has within
its co-component.

\begin{proposition}\label{prop:cocomponent-upper}
Suppose that \(G\) is connected, \(\overline G\) is disconnected, and \(G\)
has no universal vertex. If \(H_1,\ldots,H_t\) are the co-components of \(G\)
and
\[
q(H_{(1)})\le q(H_{(2)})\le\cdots\le q(H_{(t)}),
\]
then
\[
\zdmg(G)\le q(H_{(1)})+q(H_{(2)}).
\]
\end{proposition}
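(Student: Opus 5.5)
The plan is to give an explicit zombie strategy in which the zombie alternates between two fixed adjacent vertices, one in each of the two co-components with the smallest values of \(q\). Write \(H=H_{(1)}\) and \(H'=H_{(2)}\). Choose \(a\in V(H)\) with \(\deg_H(a)=\Delta(H)\) and \(b\in V(H')\) with \(\deg_{H'}(b)=\Delta(H')\). Set
\[
A=V(H)-N_H[a],\qquad B=V(H')-N_{H'}[b],
\]
so that \(|A|=q(H_{(1)})\) and \(|B|=q(H_{(2)})\). Since distinct co-components are completely joined, every vertex outside \(H\) is adjacent to \(a\). Hence the nonneighbors of \(a\) in \(G\) are exactly the vertices of \(A\). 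Likewise, the nonneighbors of \(b\) are exactly the vertices of \(B\). In particular, \(a\) and \(b\) are adjacent. Because \(G\) has no universal vertex, both \(A\) and \(B\) are nonempty. The goal is to show that every damaged vertex lies in \(A\cup B\).

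The zombie starts at \(a\). A survivor starting in \(N(a)\) is captured on the first move, so we may assume it starts at some \(s_0\in A\). The key metric fact is this: whenever the zombie is at \(a\) and the survivor is at some \(s\in A\), we have \(d(a,s)=2\). Indeed, \(s\) is not adjacent to \(a\), and \(b\) lies outside \(H\), so \(b\) is adjacent to both \(a\) and \(s\). Therefore the move \(a\to b\) is legal. Symmetrically, if the zombie is at \(b\) and the survivor is at some \(s\in B\), then \(a\) is a common neighbor of \(b\) and \(s\), so the move \(b\to a\) is legal.

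The zombie's rule is: from \(a\), move to \(b\); from \(b\), move to \(a\); and capture immediately whenever the survivor is adjacent. The rule is always legal in the following sense. Consider the moment just before a zombie move from \(a\). Either the survivor is adjacent to \(a\), and the zombie captures, or the survivor is in \(A\), where \(a\to b\) is a geodesic move. The same holds with the roles of \(a,A\) and \(b,B\) exchanged.

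The damage count then follows from one observation. A vertex \(u\) is damaged only if the survivor occupies \(u\) just before a zombie move, is not captured by that move, and then moves from \(u\). Fix any such zombie move, say from \(a\) to \(b\). The survivor's position just before it is either adjacent to \(a\), in which case capture occurs and no damage results, or it lies in \(A\). The symmetric statement holds for moves from \(b\), with \(B\) in place of \(A\). Consequently, every damaged vertex lies in \(A\cup B\), and so \(\zdmg(G)\le |A|+|B|=q(H_{(1)})+q(H_{(2)})\).

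I expect the only delicate point to be the exact bookkeeping of the damage rule. The argument must show that the vertex recorded as damaged is always the one occupied before a noncapturing zombie move, and never a vertex merely passed through. It must also handle the case where the survivor passes or moves within \(N(b)\), which leads to capture on the next move. Both cases are covered by the observation above, because the survivor's position is examined only immediately before each zombie move.
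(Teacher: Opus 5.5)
Your proposal is correct and follows the paper's proof: the zombie alternates along the edge between minimum-$q$ vertices of $H_{(1)}$ and $H_{(2)}$, and every damaged vertex is a nonneighbor of one of them. Your explicit check that $a\to b$ and $b\to a$ are geodesic moves, using the common neighbor in the other co-component, fills in a legality detail that the paper leaves implicit.
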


\begin{proof}
For \(j\in\{1,2\}\), choose \(x_j\in V(H_{(j)})\) with exactly
\(q(H_{(j)})\) nonneighbors in \(H_{(j)}\). The zombie starts at \(x_1\)
and thereafter alternates along the edge \(x_1x_2\). A survivor that avoids
capture after the zombie reaches \(x_j\) must occupy a nonneighbor of \(x_j\).
Since every two vertices in distinct co-components are adjacent, every such
vertex lies in \(H_{(j)}\). Consequently, every damaged vertex belongs to
\[
\bigl(V(H_{(1)})-N_{H_{(1)}}[x_1]\bigr)
\cup
\bigl(V(H_{(2)})-N_{H_{(2)}}[x_2]\bigr),
\]
which has cardinality \(q(H_{(1)})+q(H_{(2)})\).
\end{proof}

The complete multipartite formula below shows that this upper bound can be
attained. Whether the recursive join structure of a cograph always forces
equality is recorded in Section~\ref{sec:conjectures}.

\subsection{Complete multipartite graphs}\label{sec:multipartite}

Complete multipartite graphs are the case in which every co-component is an
independent set. Although these graphs have many shortest paths, the game
reduces to the sequence of parts occupied by the two players. This gives an
exact formula and shows that the co-component bound can be sharp.

\begin{theorem}\label{thm:multipartite}
If \(G=K_{n_1,\ldots,n_k}\), where
\(k\ge2\) and \(n_1\le n_2\le\cdots\le n_k\), and \(n_1=1\), then
\[
\dmg(G)=\zdmg(G)=0.
\]
If \(n_1\ge2\), then
\[
\dmg(G)=1
\qquad\text{and}\qquad
\zdmg(G)=n_1+n_2-2.
\]
\end{theorem}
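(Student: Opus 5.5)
The case \(n_1=1\) is immediate: a vertex forming a part of size one is adjacent to every other vertex, so it is universal. Proposition~\ref{prop:zero} then gives \(\zdmg(G)=0\), and its ordinary-game analogue gives \(\dmg(G)=0\). From now on assume \(n_1\ge2\). Then \(G\) has no universal vertex, so both parameters are at least one. I would prove \(\dmg(G)\le1\) and \(\zdmg(G)\le n_1+n_2-2\) by explicit pursuer strategies, and \(\zdmg(G)\ge n_1+n_2-2\) by a survivor strategy. Throughout, I use that two vertices are nonadjacent exactly when they lie in the same part, and that a survivor who ends its move adjacent to the pursuer is captured.

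\emph{The ordinary damage number.} The cop starts at any vertex \(x\). To avoid immediate capture, the robber must start at a vertex \(u\ne x\) in the part of \(x\). The cop now passes forever. Every neighbor of \(u\) lies outside the part of \(u\), hence is adjacent to \(x\). So any robber move off \(u\) ends adjacent to the cop and is followed by capture before it can damage another vertex. Thus at most \(u\) is damaged, and \(\dmg(G)=1\).

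\emph{Upper bound for \(\zdmg\).} Each co-component is a part \(P_i\) with \(q(P_i)=n_i-1\). Proposition~\ref{prop:cocomponent-upper} therefore gives directly
\[
\zdmg(G)\le (n_1-1)+(n_2-1)=n_1+n_2-2.
\]

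\emph{Lower bound for \(\zdmg\).} This is the main step. First I describe the zombie's legal moves. Suppose the zombie is at \(z\in P_j\) and the survivor is at \(s\in P_j\setminus\{z\}\), so \(d(z,s)=2\). The legal zombie moves are then exactly the moves to vertices outside \(P_j\), since each of these is adjacent to \(s\).

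The survivor's strategy is as follows.
\begin{itemize}
\item It starts in the zombie's initial part, at a vertex other than the zombie's; this is possible since every part has at least two vertices.
\item Whenever the zombie moves to some \(y\in P_l\), the survivor moves to a vertex of \(P_l\setminus\{y\}\), preferring a vertex not yet damaged.
\end{itemize}
Such a move is always legal: the survivor currently sits in a different part from \(P_l\), so it is adjacent to every vertex of \(P_l\). The move completes a move from the survivor's current vertex, which is therefore damaged. After the move the survivor is again in the zombie's part at distance two, so the invariant persists and the survivor is never captured.

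Consequently the play is infinite, and the zombie's sequence of parts has consecutive terms distinct. Hence at least two distinct parts \(P_a\) and \(P_b\) are visited infinitely often. Consider such a part \(P\), and suppose that at least two vertices of \(P\) are undamaged. At the next entry into \(P\), the survivor can choose an undamaged vertex other than the zombie's, and it damages that vertex when it leaves. So in each part visited infinitely often, all but at most one vertex is eventually damaged. This yields
\[
\zdmg(G)\ge (n_a-1)+(n_b-1)\ge n_1+n_2-2.
\]

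The obstacle I expect is justifying this counting cleanly, in particular handling the case where exactly one undamaged vertex remains in \(P\) and the zombie keeps occupying it on every visit. This is exactly why the bound loses one vertex per part, and why the bound matches the upper bound of Proposition~\ref{prop:cocomponent-upper}.
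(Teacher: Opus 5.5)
Your proposal is correct and follows the paper's proof essentially step for step. It uses the same universal-vertex reduction, the same passing-cop argument for $\dmg(G)=1$, and the same follow-the-zombie survivor strategy with the count over two parts visited infinitely often; the only cosmetic difference is that you get the upper bound by citing Proposition~\ref{prop:cocomponent-upper} with $q(P_i)=n_i-1$, whereas the paper re-runs that alternating-zombie argument directly along an edge $xy$ with $x\in V_1$ and $y\in V_2$.
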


\begin{proof}
If \(n_1=1\), the vertex in the singleton part is universal, and
Proposition~\ref{prop:zero} applies. Suppose, therefore, that every part has
at least two vertices.

For the upper bound on the zombie damage number, choose
\(x\in V_1\) and \(y\in V_2\). The zombie starts at \(x\). A survivor starting
outside \(V_1\) is captured on the first zombie move, so assume that it starts
in \(V_1-\{x\}\). The zombie moves to \(y\) and thereafter alternates along
the edge \(xy\). After each zombie move, the survivor must enter the part now
occupied by the zombie; moving to any other part, or passing, leaves the
players adjacent and permits capture on the next zombie move. Hence every
noncapturing play is confined to
\((V_1-\{x\})\cup(V_2-\{y\})\), and
\(\zdmg(G)\le n_1+n_2-2\).

For the reverse inequality, consider any initial zombie vertex. The survivor
starts at a different vertex in the same part. Whenever the zombie moves into
a new part, the survivor follows it into that part and chooses a vertex other
than the zombie, preferring a vertex not previously damaged. This is always
possible because every part has cardinality at least two. The resulting infinite
sequence of occupied parts has no two consecutive terms equal, so at least two
parts occur infinitely often. In each such part the survivor eventually
damages all but at most the single vertex currently blocked by the zombie.
The two parts have total cardinality at least \(n_1+n_2\), and therefore the
survivor damages at least \(n_1+n_2-2\) vertices.

Finally, an unrestricted cop may start at any vertex and pass. The robber must
remain at a nonadjacent vertex in the same part to avoid capture, and hence can
damage only that vertex. Since \(G\) has no universal vertex, the ordinary
damage number is exactly one.
\end{proof}

\begin{corollary}\label{cor:complete-bipartite}
If \(a,b\ge2\), then
\[
\dmg(K_{a,b})=1
\qquad\text{and}\qquad
\zdmg(K_{a,b})=a+b-2.
\]
In particular, \(\zdmg(K_{a,b})-\dmg(K_{a,b})=a+b-3\).
\end{corollary}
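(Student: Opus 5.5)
This follows directly from Theorem~\ref{thm:multipartite} specialized to \(k=2\). I would write \(K_{a,b}\) as a complete multipartite graph with two parts. After relabeling so that \(n_1=\min\{a,b\}\) and \(n_2=\max\{a,b\}\), the hypothesis \(a,b\ge2\) says exactly that \(n_1\ge2\). The second case of Theorem~\ref{thm:multipartite} therefore applies and gives \(\dmg(K_{a,b})=1\) and \(\zdmg(K_{a,b})=n_1+n_2-2=a+b-2\). The sum \(n_1+n_2\) does not depend on the ordering, so the relabeling is harmless.

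The difference is then the arithmetic \((a+b-2)-1=a+b-3\). There is no real obstacle. The only point to check is that Theorem~\ref{thm:multipartite} is stated for \(k\ge2\), so the bipartite case is included, and that its proof uses only that each part has at least two vertices. That hypothesis is what lets the survivor's following strategy avoid the zombie's vertex, and it holds here.

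For a self-contained remark, I would also recall the mechanism in this case. The zombie alternates along an edge \(xy\) between the two parts, which confines damage to \((V_1-\{x\})\cup(V_2-\{y\})\). Conversely, the survivor mirrors the zombie's part while avoiding its vertex, and so sweeps all but one vertex of each part. Citing the theorem directly suffices.
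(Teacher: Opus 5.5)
Your proposal is correct and matches the paper, which states this corollary without a separate proof as the case \(k=2\) of Theorem~\ref{thm:multipartite}. Relabeling so that \(n_1=\min\{a,b\}\) leaves \(n_1+n_2=a+b\) unchanged, and the difference \(a+b-3\) is then arithmetic, exactly as you wrote.
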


The complete multipartite graph is claw-free precisely when no part has more
than two vertices. Thus Theorem~\ref{thm:multipartite} also supplies a natural
claw-free family. If \(\operatorname{CP}_k=K_{2,\ldots,2}\) is the cocktail
party graph on \(2k\) vertices, then
\[
\dmg(\operatorname{CP}_k)=1
\qquad\text{and}\qquad
\zdmg(\operatorname{CP}_k)=2
\]
for every \(k\ge2\).

\section{Nonbacktracking walks and sparse graphs}\label{sec:sparse}

We next isolate a local condition that allows the survivor to prescribe the
zombie's motion. A walk
\(v_0,v_1,\ldots\) is \emph{nonbacktracking} if
\(v_{i+1}\ne v_{i-1}\) whenever both terms are defined.

\begin{lemma}\label{lem:trace}
If \(g(G)\geq5\) and
\[
v_{-2},v_{-1},v_0,v_1,\ldots
\]
is a nonbacktracking walk, then the following holds. If the zombie starts at
\(v_{-2}\), the survivor
starts at \(v_0\), and thereafter the survivor follows the displayed walk,
then the zombie is forced to follow the same trace at distance two. Consequently, every
vertex occupied by the survivor is damaged.
\end{lemma}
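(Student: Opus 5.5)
The plan is an induction on time. We maintain the invariant that, immediately before the zombie's \(i\)-th move (\(i\ge0\)), the zombie occupies \(v_{i-2}\) and the survivor occupies \(v_i\).

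\textbf{Base case.} At the start the zombie is at \(v_{-2}\) and the survivor is at \(v_0\). The preliminary step is a local metric fact: if \(g(G)\ge5\) and \(a,b,c\) is a nonbacktracking walk of length two, so that \(a\ne c\) and \(ab,bc\in E(G)\), then \(d(a,c)=2\) and \(b\) is the unique common neighbor of \(a\) and \(c\). For the distance, \(a=c\) is excluded by the nonbacktracking condition, and \(ac\in E(G)\) would give a triangle \(abc\). For uniqueness, a second common neighbor \(b'\ne b\) would give a cycle \(a\,b\,c\,b'\) of length four. In particular, the legal zombie moves from \(a\) toward a survivor at \(c\) are exactly the neighbors \(z'\) of \(a\) with \(d(z',c)=1\), that is, the common neighbors. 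So the only legal move is \(a\to b\).

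\textbf{Inductive step.} Assume the invariant holds before the zombie's \(i\)-th move. Since \(v_{i-2},v_{i-1},v_i\) is a nonbacktracking segment, the fact above applies. The players are at distance two, so the zombie cannot capture, and its unique legal move is to \(v_{i-1}\). The survivor then moves from \(v_i\) to \(v_{i+1}\), which is a legal move along an edge. The zombie is now at \(v_{i-1}\) and the survivor at \(v_{i+1}\), so the invariant holds for \(i+1\). Along the way we record that the zombie's trace is exactly \(v_{-2},v_{-1},v_0,\ldots\), lagging two steps behind.

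\textbf{Damage.} For each \(i\), the survivor occupies \(v_i\), survives the next zombie move, and then completes a move from \(v_i\) to \(v_{i+1}\). By the damage rule, \(v_i\) is damaged. This covers every vertex the survivor occupies, given that the walk is infinite. If the walk is finite and ends at \(v_m\), we have the survivor pass once at the end. This is still safe, since the zombie moves to \(v_{m-1}\) without capturing, so \(v_m\) is damaged as well.

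\textbf{Main obstacle.} The only real content is the uniqueness of the zombie move, i.e., excluding triangles and \(4\)-cycles. A subsidiary point is that the zombie might find a shorter route, but \(d=2\) already rules that out. The remaining care is bookkeeping at the start of the game: the survivor chooses \(v_0\) after the zombie fixes \(v_{-2}\), and the lemma presumes the zombie is placed at \(v_{-2}\), so nothing further is needed there.
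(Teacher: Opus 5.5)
Your proof is correct and follows the paper's route. Excluding triangles gives distance two and excluding quadrilaterals gives a unique middle vertex, so each segment \(v_{i-1},v_i,v_{i+1}\) is the unique geodesic between its endpoints; hence every zombie move is forced and the survivor damages each vertex it leaves. Your explicit invariant and the final-pass remark for a finite walk are only slightly more careful bookkeeping than the paper's version.
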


\begin{proof}
For every \(i\ge-1\), the path
\(v_{i-1},v_i,v_{i+1}\) has length two and distinct endpoints. Its endpoints
are nonadjacent, since \(G\) is triangle-free, and they have no common
neighbor other than \(v_i\), since \(G\) has no quadrilateral. It is therefore
the unique geodesic between its endpoints. In particular, the zombie's first
move is forced from \(v_{-2}\) to \(v_{-1}\). After the survivor moves to
\(v_1\), the same argument applies successively to each three-vertex segment
of the walk. Thus the zombie occupies \(v_{i-1}\) when the survivor occupies
\(v_{i+1}\), and each survivor position is damaged before the survivor moves
to the next one.
\end{proof}

The trace lemma first yields a lower bound that does not require a minimum
degree hypothesis.

\begin{theorem}\label{thm:girth-lower}
If \(G\) is a connected graph with finite girth \(g(G)\ge5\), then
\[
\zdmg(G)\ge g(G),
\]
and this bound is sharp.
\end{theorem}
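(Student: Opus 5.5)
Given any initial zombie vertex \(z\), the survivor will start on a shortest cycle and then walk around it. Lemma~\ref{lem:trace} then forces the zombie to trail at distance two and damages every vertex the survivor visits. Sharpness will come from the cycles \(C_n\), \(n\ge5\), for which Theorem~\ref{thm:cycles} gives \(\zdmg(C_n)=n=g(C_n)\).

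To build the walk, let \(g=g(G)\) and fix a shortest cycle \(C=c_0c_1\cdots c_{g-1}c_0\). Since \(g\ge5\), walking repeatedly around \(C\) in one direction is nonbacktracking. The survivor needs a nonbacktracking walk \(v_{-2},v_{-1},v_0,\ldots\) with \(v_{-2}=z\) that eventually traverses all of \(C\). Connectivity gives such a walk: take a shortest path \(P\) from \(z\) to \(C\), say ending at \(c_0\), and continue from \(c_0\) around \(C\) forever.

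This concatenation is nonbacktracking. A shortest path has no backtracking. At the junction, the vertex of \(P\) preceding \(c_0\) lies off \(C\) by minimality of \(P\), so it differs from \(c_1\). The walk along \(C\) never backtracks.

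Some degenerate cases must be checked. If \(z\) lies on \(C\), take \(P\) trivial and set \(v_{-2}=z=c_0\), \(v_{-1}=c_1\), \(v_0=c_2\). If \(P\) has length one, start with \(v_{-2}=z\), \(v_{-1}=c_0\), \(v_0=c_1\). The lemma needs \(v_0\ne v_{-2}\), which holds automatically.

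The survivor starts at \(v_0\), which is at distance two from \(z\), so it is not captured on the first zombie move. Every later vertex \(v_i\) with \(i\ge0\) is occupied and damaged, including all \(g\) vertices of \(C\), since the walk circles \(C\) infinitely often. Hence \(\zdmg(G)\ge g\).

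The main point to verify is the distance condition in Lemma~\ref{lem:trace}. It requires each segment \(v_{i-1}v_iv_{i+1}\) to be the unique geodesic. That holds for any length-two nonbacktracking segment because \(G\) has no triangles or quadrilaterals, so the lemma applies uniformly and no further case analysis is needed.
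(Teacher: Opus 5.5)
Your proposal is correct and follows the paper's proof: take a shortest path from the zombie's start to a girth cycle \(C\), circle \(C\) forever, and let Lemma~\ref{lem:trace} force damage on all \(g(G)\) vertices of \(C\), with the cycles \(C_r\), \(r\ge5\), giving sharpness. Your explicit checks of the junction vertex and of the cases where the path has length zero or one supply details the paper leaves implicit.
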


\begin{proof}
Let \(C\) be a cycle of length \(g(G)\), and fix the zombie's initial vertex
\(z\). Choose a shortest path from \(z\) to \(C\), and, upon first reaching
\(C\), continue around the cycle indefinitely in one direction. If \(z\) lies
on \(C\), simply begin by proceeding around \(C\). In either case this gives
an infinite nonbacktracking walk
\[
z=v_{-2},v_{-1},v_0,v_1,\ldots
\]
that visits every vertex of \(C\). The survivor starts at \(v_0\) and follows
the walk. Lemma~\ref{lem:trace} shows that all \(g(G)\) vertices of \(C\) are
damaged. For every integer \(r\geq5\), equality holds when \(G=C_r\) by
Theorem~\ref{thm:cycles}.
\end{proof}

\begin{lemma}\label{lem:spanning-nonbacktracking}
If \(G\) is a finite connected graph with minimum degree at least two, then
every oriented edge of \(G\) begins an infinite nonbacktracking walk that
visits every vertex of \(G\).
\end{lemma}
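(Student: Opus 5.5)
We would argue via the directed (oriented) edge graph. Let \(D\) be the digraph whose vertices are the \(2m(G)\) oriented edges of \(G\), with an arc from \((u,v)\) to \((v,w)\) whenever \(w\ne u\). Infinite nonbacktracking walks in \(G\) correspond exactly to infinite directed walks in \(D\). Since \(\delta(G)\ge2\), every oriented edge \((u,v)\) has at least one successor \((v,w)\) with \(w\ne u\), so \(D\) has no sinks. The plan is to show that \(D\) is strongly connected, or at least that from any oriented edge one can reach a strongly connected piece of \(D\) whose edges cover all vertices of \(G\). Walking once through such a piece, then around it forever, gives the desired walk.

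The key step is the following reachability claim: for every oriented edge \(e=(u,v)\) and every vertex \(x\) of \(G\), there is a nonbacktracking walk starting with \(e\) that reaches \(x\), and, after reaching \(x\), can be continued indefinitely. Continuation is automatic because \(D\) has no sinks. Granting the claim, we enumerate the vertices \(x_1,\dots,x_n\). From the current final oriented edge we extend the walk nonbacktrackingly to reach \(x_1\), then from the new final oriented edge to reach \(x_2\), and so on. Finally we continue arbitrarily forever. Concatenation preserves the nonbacktracking property because each extension starts from the final oriented edge of the previous segment.

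To prove the reachability claim, we start from \((u,v)\) and walk nonbacktrackingly, always possible by \(\delta\ge2\). Finiteness forces a repeated vertex, so the walk enters a cycle \(C\) and can traverse it in a fixed direction forever. Now let \(P\) be a shortest path in \(G\) from \(C\) to \(x\), say leaving \(C\) at a vertex \(c\). Going around \(C\) in the current direction, we arrive at \(c\) along an edge of \(C\) and then leave along the first edge of \(P\). This edge is not on \(C\) unless \(x\in C\), so the step is not a backtrack, and the rest of \(P\) is a path. If \(x\) lies on \(C\) it is reached while circling. The one delicate point is when the walk from \((u,v)\) to the cycle enters \(C\): we must check that entering \(C\) and then choosing a direction never forces a reversal. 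Here we take the first repeated vertex \(w\). The walk then consists of a path followed by a cycle through \(w\), traversed in the direction the walk already took, so no reversal occurs.

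The main obstacle is the bookkeeping that guarantees no backtracking at the junctions. This applies both where the segments are concatenated and where the walk leaves \(C\) toward \(x\). The degree hypothesis is used exactly to make every oriented edge extendable, and the choice to reach \(x\) via a cycle, rather than directly, is what avoids being forced to reverse along a pendant-like path. Connectivity of \(G\) supplies the path \(P\).
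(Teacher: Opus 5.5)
Your argument is correct, and it takes a different route from the paper's.

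\textbf{The paper's proof.} It builds the same digraph \(D(G)\) and treats cycles separately. For noncycles it cites Glover and Kempton's theorem that the nonbacktracking matrix is irreducible, i.e.\ that \(D(G)\) is strongly connected. It then concatenates directed paths reaching every vertex and closes the walk with a path back to the initial oriented edge, which yields a periodic walk.

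\textbf{Your proof.} You never actually use strong connectivity, so the ``strongly connected piece'' in your first paragraph can be dropped. What you prove is the weaker statement that every vertex of \(G\) is reachable by a nonbacktracking walk from every oriented edge. You do this by walking until the first repeated vertex and then leaving the resulting cycle along a shortest path to the target. The first edge of that path cannot return to the cycle, so the junction is not a backtrack. Concatenating these extensions and then continuing arbitrarily, which is possible since \(D(G)\) has no sinks, gives the walk.

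\textbf{One point to state explicitly.} The closed segment at the first repeated vertex has length at least three, because a closed segment of length two would be a backtrack. This is exactly what makes it a genuine cycle and makes repeated circling nonbacktracking.

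\textbf{Comparison.} Your argument is self-contained and elementary. It needs no spectral citation and no cycle/noncycle case split, and it delivers exactly what the lemma and Theorem~\ref{thm:girth-five} require. The paper's route gives more: a periodic walk that visits every vertex infinitely often and returns to the initial oriented edge. The cost is reliance on a nontrivial external result.
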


\begin{proof}
Form the directed nonbacktracking edge graph \(D(G)\). Its vertices are the
oriented edges, namely the ordered pairs \((u,v)\) for which \(uv\in E(G)\), and
\((u,v)(v,w)\) is an arc whenever \(w\ne u\). If \(G\) is a cycle, either
orientation of the cycle gives the required walk.

Suppose that \(G\) is not a cycle. Glover and Kempton proved that the
nonbacktracking matrix of a connected noncycle of minimum degree at least two
is irreducible~\cite[Proposition~2.3]{GloverKempton2021}. Equivalently,
\(D(G)\) is strongly connected. Starting from any prescribed oriented edge,
we may therefore concatenate directed paths in \(D(G)\) that reach an
oriented edge incident with each vertex of \(G\), and then append a directed
path back to the initial oriented edge. Repeating this closed directed walk
gives the required infinite nonbacktracking walk.
\end{proof}

\begin{theorem}\label{thm:girth-five}
If \(G\) is a connected graph with \(\delta(G)\ge2\) and \(g(G)\ge5\), then
\[
\zdmg(G)=n(G).
\]
Moreover, this conclusion is best possible with respect to the girth
hypothesis.
\end{theorem}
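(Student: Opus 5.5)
The plan is to combine Lemma~\ref{lem:spanning-nonbacktracking} with the trace lemma, Lemma~\ref{lem:trace}. The upper bound \(\zdmg(G)\le n(G)\) is trivial, so only the lower bound needs an argument.

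For the lower bound, fix an arbitrary initial zombie vertex \(z\). Since \(\delta(G)\ge2\), the vertex \(z\) has a neighbor \(v_{-1}\). The survivor's walk must begin \(z=v_{-2},v_{-1},v_0,\ldots\), so we need an infinite nonbacktracking walk that starts with the oriented edge \((z,v_{-1})\) and visits every vertex. Lemma~\ref{lem:spanning-nonbacktracking} supplies exactly such a walk. Its third term \(v_0\) satisfies \(v_0\ne z\). Because \(g(G)\ge5\), the vertex \(v_0\) is not adjacent to \(z\), and \(d(z,v_0)=2\), so the survivor may legally start there without being captured on the first zombie move.

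The survivor now starts at \(v_0\) and follows the walk. By Lemma~\ref{lem:trace}, the zombie is forced to trail at distance two, and every vertex the survivor occupies is damaged. This includes \(v_{-2}=z\) and \(v_{-1}\) if the walk revisits them. Every vertex of \(G\) appears among \(v_0,v_1,\ldots\), because the closed directed walk built in Lemma~\ref{lem:spanning-nonbacktracking} is repeated indefinitely. In particular, \(z\) and \(v_{-1}\) recur later in the walk. Hence all \(n(G)\) vertices are damaged, and \(\zdmg(G)=n(G)\). One small point to check is that "visits every vertex" holds for the tail starting at \(v_0\), not only for the whole walk. This follows from the periodicity of the walk, or from the cycle case directly.

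For sharpness with respect to girth, we need graphs with \(\delta\ge2\) and girth \(3\) or \(4\) that do not have full zombie damage. Theorem~\ref{thm:cycles} already gives \(\zdmg(C_3)=0<3\) and \(\zdmg(C_4)=2<4\). So girth \(4\) (and girth \(3\)) cannot replace the hypothesis \(g(G)\ge5\). If a richer family is desired, Corollary~\ref{cor:complete-bipartite} gives \(K_{a,b}\) with girth \(4\) and damage \(a+b-2<n\). The only step requiring care is the legality of the survivor's initial placement, which is precisely where the girth-five hypothesis enters.
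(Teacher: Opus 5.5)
Your proposal is correct and follows essentially the paper's proof: the survivor starts two steps along a spanning nonbacktracking walk supplied by Lemma~\ref{lem:spanning-nonbacktracking}, Lemma~\ref{lem:trace} then forces every vertex to be damaged, and \(C_4\) witnesses sharpness. The differences are minor. You apply the lemma directly to the oriented edge \((z,v_{-1})\), whereas the paper chooses \(v_0\) by hand and applies it to \((v_{-1},v_0)\). You also state explicitly the periodicity needed for the tail \(v_0,v_1,\ldots\) to cover every vertex, which the paper leaves implicit in the lemma's construction.
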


\begin{proof}
Fix the zombie's initial vertex \(v_{-2}\). Choose a neighbor \(v_{-1}\) of
\(v_{-2}\), and then choose
\(v_0\in N(v_{-1})-\{v_{-2}\}\), which is possible because
\(\delta(G)\ge2\). By
Lemma~\ref{lem:spanning-nonbacktracking}, extend the oriented edge
\((v_{-1},v_0)\) to an infinite nonbacktracking walk
\[
v_{-1},v_0,v_1,v_2,\ldots
\]
that visits every vertex. The survivor starts at \(v_0\) and follows this
walk. Lemma~\ref{lem:trace} shows that every vertex of \(G\) is damaged.

The girth threshold cannot be reduced: \(C_4\) has minimum degree two and
\(g(C_4)=4\), but Theorem~\ref{thm:cycles} gives
\(\zdmg(C_4)=2<n(C_4)\).
\end{proof}

\begin{corollary}\label{cor:cubic-girth}
If \(G\) is a cubic graph with \(g(G)\ge5\), then
\(\zdmg(G)=n(G)\).
\end{corollary}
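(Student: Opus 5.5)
This follows directly from Theorem~\ref{thm:girth-five}, so the plan is only to check its hypotheses. Let \(G\) be a cubic graph with \(g(G)\ge5\). First, every vertex has degree three, so \(\delta(G)=3\ge2\). Second, the girth hypothesis is assumed. The remaining hypothesis of Theorem~\ref{thm:girth-five} is connectivity, and this is the only point that needs attention. By the standing convention of Section~\ref{sec:prelim}, all graphs are connected unless stated otherwise, so \(G\) is connected and Theorem~\ref{thm:girth-five} gives \(\zdmg(G)=n(G)\).

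If one wants to avoid relying on the convention, I would make the connectivity assumption explicit in the statement. Without it the zombie damage game is not well posed, since distances between components are infinite. So connectivity should be taken as part of the hypothesis rather than proved.

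For completeness, I would note the mechanism behind the conclusion. Fix any initial zombie vertex \(v_{-2}\). Choose a neighbor \(v_{-1}\) and a further neighbor \(v_0\ne v_{-2}\) of \(v_{-1}\); this is possible since every vertex has degree three. Lemma~\ref{lem:spanning-nonbacktracking} extends the oriented edge \((v_{-1},v_0)\) to a spanning infinite nonbacktracking walk, since a cubic graph is never a cycle and so the Glover--Kempton irreducibility applies. Lemma~\ref{lem:trace} then forces the zombie to trail the survivor at distance two, so every vertex is damaged. There is no real obstacle beyond checking these hypotheses.
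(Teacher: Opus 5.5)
Your proposal is correct and matches the paper. The paper states this corollary without a separate proof, as an immediate consequence of Theorem~\ref{thm:girth-five}: \(\delta(G)=3\ge2\), and connectivity comes from the standing convention of Section~\ref{sec:prelim}. Your sketch of the trace mechanism reproduces that theorem's proof, with the correct observation that a cubic graph is not a cycle, so the Glover--Kempton irreducibility applies.
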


Full subdivision eliminates the two local obstructions in
Theorem~\ref{thm:girth-five} and therefore gives a broad operation that forces
maximum damage.

\begin{corollary}\label{cor:subdivision}
If \(G\) is a connected graph with \(\delta(G)\ge2\), then
\[
\zdmg(S(G))=n(G)+m(G).
\]
\end{corollary}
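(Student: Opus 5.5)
The plan is to apply Theorem~\ref{thm:girth-five} to \(S(G)\). This requires checking three properties of \(S(G)\): it is connected, it has minimum degree at least two, and it has girth at least five. Then \(\zdmg(S(G))=n(S(G))\). Since \(S(G)\) has one vertex for each vertex of \(G\) and one subdivision vertex for each edge of \(G\), we get \(n(S(G))=n(G)+m(G)\), which is the claimed formula.

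\emph{Connectivity.} Every edge \(uv\) of \(G\) is replaced by a path \(u,w_{uv},v\). Hence any \(u\)-\(v\) walk in \(G\) lifts to a walk in \(S(G)\) between the same original vertices. Each subdivision vertex is adjacent to an original vertex, so \(S(G)\) is connected.

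\emph{Minimum degree.} An original vertex \(v\) has the same degree in \(S(G)\) as in \(G\), namely \(\deg_G(v)\ge2\). Each subdivision vertex \(w_{uv}\) has degree exactly two. Therefore \(\delta(S(G))\ge2\).

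\emph{Girth.} The graph \(S(G)\) is bipartite, with original vertices on one side and subdivision vertices on the other. Every cycle in \(S(G)\) therefore alternates between the two sides. A cycle in \(S(G)\) of length \(2\ell\) corresponds to a closed walk \(v_1,\dots,v_\ell\) in \(G\) that uses \(\ell\) distinct edges, since its subdivision vertices are distinct. This walk is in fact a cycle of \(G\), because the original vertices on a cycle of \(S(G)\) are distinct. Because \(G\) is simple, \(\ell\ge3\), so every cycle of \(S(G)\) has length at least \(6\). The girth is finite because \(\delta(G)\ge2\) forces \(G\) to contain a cycle. In any case the hypothesis of Theorem~\ref{thm:girth-five} only needs \(g\ge5\), and acyclicity is excluded by \(\delta(S(G))\ge2\).

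The only step needing care is the girth bound, specifically ruling out short cycles that repeat an original vertex. This is handled by the fact that a cycle visits distinct vertices together with the simplicity of \(G\). Everything else is immediate, and the corollary then follows directly from Theorem~\ref{thm:girth-five}.
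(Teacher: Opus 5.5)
Your proposal is correct and matches the paper's proof: check that \(S(G)\) is connected, has minimum degree at least two, and has girth at least six, then apply Theorem~\ref{thm:girth-five} with \(n(S(G))=n(G)+m(G)\). Your girth argument, using bipartiteness and the simplicity of \(G\), fills in a detail the paper states without proof.
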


\begin{proof}
The graph \(S(G)\) is connected, has minimum degree two, and satisfies
\(g(S(G))\ge6\). The result follows from Theorem~\ref{thm:girth-five}.
\end{proof}

\section{Metric and pursuit comparisons}\label{sec:comparisons}

The zombie damage number measures containment rather than capture. We first
record a metric lower bound inherited from the ordinary damage game. Huggan et
al.\ proved that \(\dmg(G)\ge \operatorname{rad}(G)-1\) for every connected
graph~\cite[Theorem~1.3]{HugganEtAl2024}.
Proposition~\ref{prop:comparison}
immediately transfers their bound to the zombie game.

\begin{corollary}\label{cor:radius}
If \(G\) is a connected graph, then
\[
\zdmg(G)\ge \operatorname{rad}(G)-1,
\]
and this bound is sharp.
\end{corollary}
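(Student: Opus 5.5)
The inequality itself follows at once from Proposition~\ref{prop:comparison} together with the Huggan et al.\ bound \(\dmg(G)\ge\operatorname{rad}(G)-1\)~\cite[Theorem~1.3]{HugganEtAl2024}. Chaining them gives
\[
\zdmg(G)\ge\dmg(G)\ge\operatorname{rad}(G)-1 .
\]
So the only real content of the statement is sharpness. I plan to establish it by exhibiting, for every value \(r\ge1\) of the radius, a connected graph with \(\zdmg(G)=\operatorname{rad}(G)-1\).

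For sharpness I would first use the graphs with a universal vertex. Such a graph has radius one, and Proposition~\ref{prop:zero} gives \(\zdmg(G)=0=\operatorname{rad}(G)-1\). This settles radius one, but it is a degenerate case. To obtain an infinite family with unbounded radius I would use paths, via Theorem~\ref{thm:paths}. For odd \(n=2r+1\) we have \(\operatorname{rad}(P_n)=r\) and \(\zdmg(P_n)=\lfloor n/2\rfloor-1=r-1\), which is exactly \(\operatorname{rad}(P_n)-1\). Even paths \(P_{2r}\) also work: their radius is \(r\) and \(\lfloor 2r/2\rfloor-1=r-1\).

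The remaining step is to verify the radius of the path. The central vertex of \(P_{2r+1}\) has eccentricity \(r\). Every vertex has eccentricity at least \(\lceil (n-1)/2\rceil\), since one of the two endpoints lies at least that far from it. Hence \(\operatorname{rad}(P_{2r+1})=r\), and likewise \(\operatorname{rad}(P_{2r})=r\).

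No step here is a genuine obstacle. The only point needing care is that the sharpness examples are stated for every radius, so that the bound is attained by more than the trivial radius-one graphs. Paths of both parities accomplish this directly, since their damage is already computed in Theorem~\ref{thm:paths}.
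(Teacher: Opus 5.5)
Your proposal is correct and is essentially the paper's proof: you chain Proposition~\ref{prop:comparison} with the Huggan et al.\ bound, and you obtain sharpness for every path from Theorem~\ref{thm:paths} using \(\operatorname{rad}(P_n)=\lfloor n/2\rfloor\). Your radius-one remark via Proposition~\ref{prop:zero} is a harmless extra that the paper does not need.
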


\begin{proof}
The inequality follows from
\(\zdmg(G)\ge\dmg(G)\ge\operatorname{rad}(G)-1\). Equality holds for every
path by Theorem~\ref{thm:paths}, since
\(\operatorname{rad}(P_n)=\lfloor n/2\rfloor\).
\end{proof}

The complete bipartite formula gives a complementary obstruction to upper
bounds. It shows that the ordinary damage number, the two capture parameters,
connected domination, and minimum degree may all remain fixed while zombie
damage grows without bound.

\begin{proposition}\label{prop:k2b-comparison}
If \(b\ge2\), then
\[
\begin{aligned}
\zdmg(K_{2,b})&=b,
&
\dmg(K_{2,b})&=1,\\
c(K_{2,b})&=z(K_{2,b})=2,
&
\gamma_c(K_{2,b})&=\delta(K_{2,b})=2.
\end{aligned}
\]
Consequently, \(\zdmg(G)\) is not bounded above by any function of
\(\dmg(G)\), \(c(G)\), \(z(G)\), \(\gamma_c(G)\), and \(\delta(G)\).
\end{proposition}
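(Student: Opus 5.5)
The plan is to read off each of the six values from earlier results or from short direct arguments, and then deduce the unboundedness statement.

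For the two damage parameters, Corollary~\ref{cor:complete-bipartite} with \(a=2\) gives \(\zdmg(K_{2,b})=2+b-2=b\) and \(\dmg(K_{2,b})=1\). The same values also follow from Theorem~\ref{thm:multipartite} with \(n_1=2\) and \(n_2=b\), whichever part is smaller.

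For the minimum degree, I will observe that every vertex of the side of size \(b\) has degree \(2\). Every vertex of the side of size \(2\) has degree \(b\geq2\). Hence \(\delta(K_{2,b})=2\).

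For connected domination, I will take one vertex from each part. These two vertices are adjacent, so they form a connected set, and together they dominate the whole graph. This gives \(\gamma_c\le2\). A single vertex would be universal, and \(K_{2,b}\) has no universal vertex when \(b\geq2\). Hence \(\gamma_c(K_{2,b})=2\).

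For the cop number, two cops placed on the two vertices of the part of size \(2\) dominate the graph and capture on their first move, so \(c\le2\). For the matching bound on \(z\), I will place the zombies at the same two vertices. Every vertex of the other part is adjacent to both, so the survivor is adjacent to a zombie after its placement. Moving onto the survivor is then a legal geodesic move, and capture is immediate. So \(z\le2\).

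For the lower bound \(c\geq2\), note that \(K_{2,b}\) contains an induced \(C_4\) and is not dismantlable. Concretely, take one cop at vertex \(u\). The robber sits at a nonneighbor of \(u\): another vertex of \(u\)'s part, which exists because both parts have size at least \(2\). After each cop move the robber moves to a vertex in the cop's current part other than the cop's vertex. Such a vertex is adjacent to the robber's position and nonadjacent to the cop, so the robber evades forever. Since every zombie strategy is a cop strategy, \(z\geq c\). Together with the upper bounds this gives \(c=z=2\).

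The consequence then follows directly. As \(b\to\infty\), the values \(\dmg\), \(c\), \(z\), \(\gamma_c\) and \(\delta\) stay at \(1,2,2,2,2\), while \(\zdmg=b\) is unbounded. So no function of these five parameters can bound \(\zdmg\) from above.

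I expect no step to be a serious obstacle. The only one needing care is the robber's evasion strategy that gives the lower bound on \(c\). There I must check that the robber's chosen vertex is always adjacent to its current position, which holds because that position lies in the other part, or in the initial round is a nonneighbor of \(u\) in \(u\)'s own part.
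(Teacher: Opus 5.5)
Your proposal is correct and follows the paper's proof essentially step by step: the damage values come from Corollary~\ref{cor:complete-bipartite}, two pursuers on the two-vertex part give the upper bounds on \(c\) and \(z\), a part-following evasion strategy gives the lower bound, and \(\gamma_c\) and \(\delta\) are checked directly. One small correction: when the cop passes, the robber is already in the cop's part and should simply pass, as the paper states explicitly, so your claim that the robber's target vertex is adjacent to its current position holds only when the cop crosses to the other part.
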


\begin{proof}
Corollary~\ref{cor:complete-bipartite} gives
\(\dmg(K_{2,b})=1\) and \(\zdmg(K_{2,b})=b\). One pursuer cannot force
capture in either game: after each pursuer move, the evader can move to a
different vertex in the pursuer's new part, or pass if it is already there.
Two cops, or two zombies, may occupy the two-vertex part and capture an evader
in the other part on the first pursuer move. Hence
\(c(K_{2,b})=z(K_{2,b})=2\). A set containing one vertex from each part is a
connected dominating set, and no single vertex dominates the graph, so
\(\gamma_c(K_{2,b})=2\). Finally, \(\delta(K_{2,b})=2\). Since \(b\) is
arbitrary, the conclusion follows.
\end{proof}

Maximum degree behaves differently: on this family,
\(\zdmg(K_{2,b})=\Delta(K_{2,b})=b\). It cannot provide a universal lower
bound, however, since \(\zdmg(K_n)=0\) while \(\Delta(K_n)=n-1\).

\section{Open Conjectures}\label{sec:conjectures}

The preceding results isolate two mechanisms governing zombie damage. Dense
joins confine the survivor to the nonneighbors of a small number of vertices,
whereas sparse graphs may allow the survivor to prescribe a nonbacktracking
trace. The conjectures below ask how far these mechanisms extend in graph
classes with canonical decompositions or tightly controlled local structure.

We begin with cographs. Every connected cograph is a join of its
co-components, so Proposition~\ref{prop:cocomponent-upper} applies at the top
level of its recursive decomposition. Theorem~\ref{thm:multipartite} proves
that the bound is exact when the co-components are independent sets. This
suggests that, in a cograph, internal edges within a co-component do not give
the zombie a more efficient confinement strategy than alternating between two
co-components.

\begin{conjecture}[\textsc{Theo-Conjecture}]\label{conj:cograph}
If \(G\) is a connected cograph with no universal vertex, then
\[
\zdmg(G)=q(H_{(1)})+q(H_{(2)}),
\]
where \(H_{(1)}\) and \(H_{(2)}\) are defined as in
Proposition~\ref{prop:cocomponent-upper}.
\end{conjecture}

The cograph condition cannot simply be omitted: in a general join, an internal
common neighbor may let the zombie leave the alternating strategy and confine
the survivor more efficiently. Thus the conjecture is a statement about the
full recursive join structure, not merely about disconnected complements.

Trees and split graphs point to a second possible extension. Both are chordal,
and Theorems~\ref{thm:trees} and~\ref{thm:split} show that the geodesic
restriction costs no additional damage on either class. A chordal graph has a
perfect elimination ordering and decomposes along clique separators. These
features suggest that an optimal cop strategy can be transferred through the
clique decomposition without creating the cyclic ambiguity that a survivor
exploits on a long induced cycle.

\begin{conjecture}[\textsc{Theo-Conjecture}]\label{conj:chordal}
If \(G\) is chordal, then
\[
\dmg(G)=\zdmg(G)
\le \left\lfloor\frac{\gamma_c(G)}2\right\rfloor.
\]
Moreover, this bound is sharp.
\end{conjecture}

The coefficient in this conjecture would be best possible. Indeed,
\(\gamma_c(P_n)=n-2\) for \(n\geq3\), and Theorem~\ref{thm:paths} gives
\[
\zdmg(P_n)=\left\lfloor\frac{\gamma_c(P_n)}2\right\rfloor.
\]

Claw-free graphs behave differently. Cycles and cocktail-party graphs show
that equality of the ordinary and zombie damage numbers is already too much to
expect. Nevertheless, claw-freeness removes the complete-bipartite obstruction
from Proposition~\ref{prop:k2b-comparison}: the neighborhood of every vertex
has independence number at most two, so a survivor cannot encounter
arbitrarily many pairwise independent escape directions at one pursuer
position. This bounded local branching makes a universal multiplicative
comparison plausible.

\begin{conjecture}[\textsc{Theo-Conjecture}]\label{conj:clawfree-factor}
If \(G\) is claw-free, then
\[
\zdmg(G)\le 4\,\dmg(G).
\]
\end{conjecture}

The conjecture asks whether the local two-branch restriction can be converted
into a global four-to-one charging argument between vertices damaged in the
two games. More generally, it is the first case of a natural induced-star
problem.
For \(r\geq3\), let
\[
c_r=\sup\left\{
\frac{\zdmg(G)}{\dmg(G)}:
\begin{array}{l}
G\text{ is connected and }K_{1,r}\text{-free},\\
\dmg(G)>0
\end{array}
\right\},
\]
where \(K_{1,r}\)-free means that \(G\) contains no induced copy of
\(K_{1,r}\). The classes under consideration are nested, so the sequence
\((c_r)_{r\geq3}\) is nondecreasing. The complete bipartite formula gives the
rigorous lower bound
\[
c_r\geq
\frac{\zdmg(K_{r-1,r-1})}{\dmg(K_{r-1,r-1})}
=2r-4.
\]
In this notation, Conjecture~\ref{conj:clawfree-factor} asserts that
\(c_3\leq4\). The lower bound above shows that any general theory of the
constants \(c_r\) must grow at least linearly with \(r\).

\begin{problem}\label{prob:induced-star-factor}
Determine whether \(c_r\) is finite for every \(r\geq3\). If it is, determine
\(c_r\), or at least its asymptotic growth as \(r\) tends to infinity.
\end{problem}

Finally, consider cubic graphs. Cubic regularity alone is insufficient:
\(K_4\), \(K_{3,3}\), the triangular prism, and the three-dimensional cube do
not have full zombie damage. Bridges provide natural bottlenecks through which
the zombie may confine the survivor, but excluding bridges does not remove
2-edge cuts or other cyclic separations. Corollary~\ref{cor:cubic-girth}
proves full damage whenever the girth is at least five. Any further
obstruction must therefore use triangles or quadrilaterals to create competing
geodesic moves that cannot be avoided by a spanning survivor trace. In a
cubic graph, however, the attachments of a short cycle are tightly
constrained, and bridgelessness prevents any one attachment edge from
separating that cycle from the rest of the graph. This tension motivates the
following conjecture.

\begin{conjecture}[\textsc{Theo-Conjecture}]\label{conj:cubic}
If \(G\) is a bridgeless cubic graph with \(n(G)\geq10\), then
\[
\zdmg(G)=n(G).
\]
\end{conjecture}

Corollary~\ref{cor:cubic-girth} proves
Conjecture~\ref{conj:cubic} in the girth-at-least-five case. The unresolved
case asks whether the survivor can bypass every local ambiguity created by
triangles and quadrilaterals, even in the presence of cyclic edge cuts.

\section{Concluding remarks and further problems}\label{sec:conclusion}

The zombie damage number separates two roles of a pursuer that coincide in the
usual capture game. A zombie may fail to capture while still confining the
survivor, or its shortest-path restriction may allow the survivor to traverse
the entire graph. Trees and split graphs exhibit the first behavior. Cycles of
length at least five and, more generally, minimum-degree-two graphs of girth at
least five exhibit the second. Complete multipartite graphs lie between these
extremes: the survivor is confined to two parts, but the cardinalities of those parts
can make the resulting damage arbitrarily large.

The dense and sparse arguments also point to different structural programs.
For dense graphs, the relevant objects are modules, co-components, and clique
separators that restrict the survivor's safe positions. For sparse graphs, the
essential question is whether a survivor can prescribe a sufficiently large
nonbacktracking walk. The conjectures in Section~\ref{sec:conjectures} suggest
that these mechanisms extend well beyond the classes proved here.

\begin{problem}\label{prob:equality}
Characterize the connected graphs \(G\) for which
\(\zdmg(G)=\dmg(G)\). In particular, determine whether every chordal graph
satisfies this equality.
\end{problem}

\begin{problem}\label{prob:full-damage}
Characterize the connected graphs \(G\) for which
\(\zdmg(G)=n(G)\). In particular, determine whether every bridgeless cubic
graph \(G\) with \(n(G)\geq10\) has full zombie damage.
\end{problem}

\section*{Computational and methodological provenance}

Most of the problems, conjectures, and theorem directions developed in this
paper were stimulated by a \textsc{Theo-Conjecture} loop.
\textsc{Theo-Conjecture} is an advisor-supervised computer system for
mathematical discovery. In this study, exact ordinary and zombie damage values
and relevant structural annotations were assembled for a collection of
graphs. An automated conjecturing backend proposed candidate equalities,
inequalities, and statements for specified graph classes. OpenAI Codex assisted in
interpreting these candidates, designing counterexample searches, exploring
proof strategies, auditing arguments, organizing the manuscript, and revising
its language.

Candidate statements were checked against the available graphs and additional
independently generated examples. When a candidate failed, its counterexample
was retained and incorporated before the next conjecturing pass. The loop
therefore used failed statements to refine later questions rather than to
discard the surrounding line of inquiry. The author determined which
directions were mathematically significant, selected and formulated the
statements appearing here, developed the proofs, and independently checked
every mathematical claim. The computations and machine-generated suggestions
served as instruments of discovery; no theorem in this paper depends on
finite verification. The author accepts full responsibility for the article.

\section*{Data and code availability}

The finite-state solvers, graph-generation programs, and machine-readable data
supporting the computational statements are available from the author upon
reasonable request.

\bibliographystyle{abbrvnat}
\bibliography{references}

\end{document}